\begin{document}
\baselineskip 18pt
\hfuzz=6pt

\newtheorem{theorem}{Theorem}[section]
\newtheorem{prop}[theorem]{Proposition}
\newtheorem{lemma}[theorem]{Lemma}
\newtheorem{definition}[theorem]{Definition}
\newtheorem{cor}[theorem]{Corollary}
\newtheorem{example}[theorem]{Example}
\newtheorem{remark}[theorem]{Remark}
\newcommand{\ra}{\rightarrow}
\renewcommand{\theequation}
{\thesection.\arabic{equation}}
\newcommand{\ccc}{{\mathcal C}}
\newcommand{\one}{1\hspace{-4.5pt}1}

\newcommand{\wh}{\widehat}
\newcommand{\f}{\frac}
\newcommand{\df}{\dfrac}
\newcommand{\sgn}{\textup{sgn\,}}
 \newcommand{\rn}{\mathbb R^n}
  \newcommand{\si}{\sigma}
  \newcommand{\ga}{\gamma}
   \newcommand{\nf}{\infty}
\newcommand{\p}{\partial}
\newcommand{\De}{\Delta}

\newcommand{\norm}[1]{\left\|{#1}\right\|}%
\newcommand{\supp}{\operatorname{supp}}

\newcommand{\tf}{\tfrac}
\newcommand{\lab}{\label}
\newcommand{\zzz}{\mathbf Z}
\newcommand{\li}{L^{\infty}}
\newcommand{\intrn}{\int_{\rn}}
\newcommand{\q}{\quad}
\newcommand{\qq}{\quad\quad}
\newcommand{\qqq}{\quad\quad\quad}


\newcommand{\vp}{\varphi}
\newcommand{\al}{\alpha}
\newcommand{\R}{\RR}
\newcommand{\intr}{\int_{\R}}
\newcommand{\intrr}{\int_{\R^2}}
\newcommand{\de}{\delta}
\newcommand{\om}{\omega}
\newcommand{\Tht}{\theta}
\newcommand{\tht}{\theta}
\newcounter{question}
\newcommand{\qt}{%
        \stepcounter{question}%
        \thequestion}
\newcommand{\bq}{\fbox{Q\qt}\ }
\renewcommand{\wp}{\psi}
\newcommand{\sgg}{\si_{gg}}
\newcommand{\sbtj}{\si_{b2j}}
\newcommand{\sbrk}{\si_{b3k}}
\newcommand{\sbzg}{\si_{b_0g}}

\newcommand{\abs}[1]{\left\vert #1\right\vert}%

\newcommand{\be}{\beta}%
\newcommand{\ve}{\varepsilon}%
\newcommand{\beq}{\begin{equation}}%
\newcommand{\eeq}{\end{equation}}

\def\RR{\mathbb R}
\def\bbr{\mathbb R}

\title[The Marcinkiewicz Multiplier Theorem]
{The Marcinkiewicz multiplier theorem revisited}

\thanks{  }

\author{Loukas Grafakos}

\address{Department of Mathematics, University of Missouri, Columbia MO 65211, USA}
\email{grafakosl@missouri.edu}

\author{Lenka Slav\'ikov\'a}
\address{Department of Mathematics, University of Missouri, Columbia MO 65211, USA}
\email{slavikoval@missouri.edu}

\thanks{{\it Mathematics Subject Classification:} Primary   42B15. Secondary 42B25}
\thanks{{\it Keywords and phrases:} Multiplier theorems, Sobolev spaces, interpolation}
\thanks{The first   author   acknowledges the  support of the Simons Foundation and of the University  of Missouri Research Board.}

\begin{abstract}
We provide a complete proof of an optimal version of the Marcinkiewicz multiplier theorem.
\end{abstract}

\maketitle


\bigskip


\maketitle

\section{Introduction and Statement of Results} 
We revisit a product-type Sobolev space version of the  Marcinkiewicz multiplier theorem.   A version of this result first appeared in   Carbery~\cite{C} but a stronger version of it 
is a consequence of the work of  Carbery and Seeger~\cite{CSTAMS}. 
In this note we provide a self-contained proof of the  Marcinkiewicz multiplier theorem, we  point out that the  conditions on the indices   are optimal, and we provide a comparison with the H\"ormander multiplier theorem, which indicates that the former is  stronger than the latter. 

Given a bounded function $\si$ on $\rn$, we define a   linear operator
$$
T_\si(f)(x) = \int_{\rn} \wh{f}(\xi) \si(\xi) e^{2\pi i x\cdot \xi}d\xi
$$
acting on Schwartz functions $f$  on $\rn$; here 
$\wh{f}(\xi) = \int_{\rn} f(x)   e^{-2\pi i x\cdot \xi}dx$ is the Fourier transform of $f$.  
An old problem  in harmonic analysis is to find optimal  sufficient  conditions on $\si$ so that the 
operator $T_\si$ admits a bounded extension from $L^p(\rn)$ to itself for a given $p\in (1,\nf)$. 
If this is the case for a given $\si$, then we say that $\si$ is an $L^p$ Fourier multiplier.

We recall the classical Marcinkiewicz multiplier theorem:  Let 
$I_j= (-2^{j+1}, -2^j]\cup [2^j, 2^{j+1})$ for $j\in \mathbb Z$. 
Let $\si$ be a bounded  function
on $\rn$ such that for all $\al=(\al_1,\dots , \al_n)$ with $|\al_1|, \dots ,|\al_n|\le 1$ the derivatives $\p^\al \si$ are continuous up to
the boundary of any rectangle $I_{j_1}\times \cdots \times I_{j_n}$ on $\rn$.  Assume
 that  there is a   constant $A<\nf $ such that for all partitions
 $ \{s_1,\dots , s_k\}\cup\{r_1,\dots , r_\ell\}=\{1,2,\dots , n\}$ with $n=k+\ell$
 and all $\xi_{j_i}\in I_{j_i} $ we have
\begin{equation}\lab{5.2.mar-cond}
 \sup_{\xi_{r_1}\in I_{j_{r_1}}}\cdots
  \sup_{\xi_{r_\ell}\in I_{j_{r_\ell}}}
\int_{I_{j_{s_1}}} \!\! \cdots\! \int_{I_{j_{s_k}} }
\!\! \!\big |  (\p_{ {s_1}}  \cdots \p_{ {s_k}} \si ) (\xi_1, \dots , \xi_n)\big|  \,
d\xi_{ {s_k}}\cdots d\xi_{ {s_1}} \le   A
\end{equation}
for all $ (j_1,\dots , j_n)\in \mathbb Z^n$.
Then $\si$ is an $L^p$ Fourier multiplier on $\rn$ whenever $1<p<\nf$,
and there is a constant $C_{p, n}<\nf$ such that
\begin{equation}\lab{5.2.mar-cond-567}
 \|T_\si \|_{L^p\to L^p}\le C_{p, n}\big(A+ \|\si \|_{\li}\big) . 
\end{equation}

To obtain a Sobolev space version of this result  we define
$(I-\p_{{ \ell}}^2)^{\f{\ga_{\ell}}{2}}f  $
as the linear operator $ ((1+4\pi^2|\eta_{\ell}|^2)^{\f{\ga_{\ell}}{2}}\wh f(\eta))^{\vee} $ 
associated with   the multiplier
$(1+4\pi^2|\eta_{\ell}|^2)^{\f{\ga_{\ell}}{2}}$.  
The purpose of  this note is to  provide a self-contained exposition  
of a   version of the Marcinkiewicz multiplier theorem which   requires only $1/r+\varepsilon$ derivatives per variable in $L^r$ 
  instead of a full derivative in $L^1$ as in \eqref{5.2.mar-cond}.  

 \begin{theorem}\label{ThmMain}
Let $n\in \mathbb N$, $n\geq 2$. 
Suppose that $1\le r \le \infty$  and $\psi$ is a Schwartz function on the line 
whose Fourier transform is supported in $[-2,-1/2]\cup [1/2,2]$ and which satisfies 
$\sum_{j\in \mathbb Z} \wh{\psi}(2^{-j}\xi)=1$ for all $\xi\neq 0$. Let $\ga_\ell>1/r$, $\ell=1,\dots , n$. If  
a function   $\si$ on $\rn$ satisfies
\begin{equation}\label{20000}
\sup_{ k_1,\dots  , k_n\in \mathbb Z}   
  \big\|(I-\p_{{ 1}}^2)^{\f {\ga_1}2} \cdots(I-\p_{{ n}}^2)^{\f {\ga_n}2} \big( \wh{\psi}(\xi_1)\cdots 
\wh{\psi}(\xi_n) \si (2^{k_1} \xi_1, \dots , 2^{k_n}\xi_n)\big) \big\|_{L^r }<\infty,
\end{equation}
then $T_\si$ admits a bounded extension from
$L^p(\rn)$ to itself for all $1<p<\nf$ with
\begin{equation}\label{Marc-con}
\Big| \f1p -\f12\Big| < \min (\ga_1, \dots , \ga_n). 
\end{equation}  
Moreover,  \eqref{Marc-con} is optimal in the sense that if  $T_\si$ is $L^p$-bounded for every $\sigma$ satisfying~\eqref{20000}, then  the strict inequality in \eqref{Marc-con} must necessarily hold.
\end{theorem}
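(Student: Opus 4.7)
The proof splits into the sufficiency of \eqref{Marc-con} and its optimality. The plan for sufficiency is to decompose $\si$ via a product Littlewood--Paley partition and reduce matters to a vector-valued estimate for the dyadic pieces, which is then handled by iterating a one-dimensional multiplier theorem of Calder\'on--Torchinsky type on the rescaled pieces, taking advantage of the rescaling built into hypothesis \eqref{20000}. For optimality the plan is to reduce to a known sharp 1D counterexample.

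\textbf{Sufficiency setup.} I would write $\si=\sum_{k\in\mathbb Z^n}\si_k$ with
$$\si_k(\xi)=\si(\xi)\prod_{\ell=1}^n\wh\psi(2^{-k_\ell}\xi_\ell),$$
and rescale by setting $\si_k^*(\xi)=\si(2^{k_1}\xi_1,\dots,2^{k_n}\xi_n)\prod_\ell\wh\psi(\xi_\ell)$, so that \eqref{20000} reads exactly $\sup_k\|(I-\p_1^2)^{\ga_1/2}\cdots(I-\p_n^2)^{\ga_n/2}\si_k^*\|_{L^r}<\nf$. Since each $\si_k$ is frequency-localised in $\prod_\ell I_{k_\ell}$, we have $T_{\si_k}f=T_{\si_k}\widetilde\Delta_kf$ for a fattened product Littlewood--Paley projection $\widetilde\Delta_k$ equal to $1$ on $\supp\si_k$. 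The product Littlewood--Paley inequality then gives
$$\|T_\si f\|_{L^p}\le C_p\Big\|\Big(\sum_{k\in\mathbb Z^n}|T_{\si_k}\widetilde\Delta_kf|^2\Big)^{1/2}\Big\|_{L^p},\qquad 1<p<\nf,$$
reducing the proof to a vector-valued estimate for $\{T_{\si_k}\}$. At $p=2$ this is immediate from Plancherel together with $\sup_k\|\si_k\|_\li<\nf$, the latter following from iterated 1D Sobolev embedding $W^{\ga_\ell,r}(\R)\hookrightarrow\li(\R)$ (valid because $\ga_\ell>1/r$) applied to the factorised $\si_k^*$.

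\textbf{The vector-valued step: the main obstacle.} For \eqref{Marc-con} over the full range, I would establish
$$\Big\|\Big(\sum_k|T_{\si_k}g_k|^2\Big)^{1/2}\Big\|_{L^p}\le C\Big\|\Big(\sum_k|g_k|^2\Big)^{1/2}\Big\|_{L^p}$$
by induction on the dimension $n$. For $n=2$ (general $n$ then proceeds identically), freezing $\xi_2$ reduces matters to a 1D multiplier $\si_k(\cdot,\xi_2)$ whose $\ga_1$-order $L^r$ Sobolev norm is controlled by \eqref{20000} uniformly in $\xi_2$ and $k_2$; apply the 1D Calder\'on--Torchinsky multiplier theorem in its vector-valued form (obtainable via Rubio de Francia extrapolation from a weighted $L^p$ formulation) in $x_1$, then the analogue in $x_2$, to cover $|1/p-1/2|<\min(\ga_1,\ga_2)$. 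The reverse product L--P inequality closes the argument. The main difficulty is precisely this vector-valued step: the 1D Calder\'on--Torchinsky theorem does not yield $L^p(\ell^2)$ bounds directly, so one must either extrapolate from a weighted $A_p$ version or give a square-function proof amenable to iteration, while preserving uniformity in $k$ across the full sharp range.

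\textbf{Optimality.} To force the strict inequality in \eqref{Marc-con}, I would reduce to the known 1D sharpness. Pick $\ell_0$ with $\ga_{\ell_0}=\min_\ell\ga_\ell$ and a 1D multiplier $\si_0$ satisfying $\sup_k\|(I-\p^2)^{\ga_{\ell_0}/2}(\wh\psi(\cdot)\si_0(2^k\,\cdot\,))\|_{L^r}<\nf$ but unbounded on $L^p(\R)$ at the endpoint $|1/p-1/2|=\ga_{\ell_0}$ (classical Miyachi--Wainger--Hirschman-type constructions). Setting $\si(\xi)=\si_0(\xi_{\ell_0})$, hypothesis \eqref{20000} factors as the 1D Sobolev norm of $\si_0$ times the absolute constants $\prod_{j\ne\ell_0}\|(I-\p^2)^{\ga_j/2}\wh\psi\|_{L^r}$, while $T_\si$ acts only in the $x_{\ell_0}$ variable. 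Hence $T_\si$ is $L^p(\rn)$-bounded iff $T_{\si_0}$ is $L^p(\R)$-bounded, which fails at the endpoint; thus the strict inequality in \eqref{Marc-con} is necessary.
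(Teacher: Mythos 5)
Your sufficiency argument has a genuine gap at exactly the point you yourself flag as ``the main obstacle'': the vector-valued inequality $\big\|(\sum_k|T_{\sigma_k}g_k|^2)^{1/2}\big\|_{L^p}\lesssim\big\|(\sum_k|g_k|^2)^{1/2}\big\|_{L^p}$ in the full range $|1/p-1/2|<\min_\ell\gamma_\ell$ \emph{is} the content of the theorem, and neither route you sketch delivers it. Extrapolation from a weighted $A_p$ formulation cannot reach the sharp range: weighted multiplier theorems of Kurtz--Wheeden type that hold for all $A_p$ weights require strictly more smoothness than the unweighted sharp statement, so Rubio de Francia extrapolation would not take you down to $\gamma_\ell$ near $|1/p-1/2|$. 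The ``freeze $\xi_2$ and iterate'' step also does not reduce to genuinely one-dimensional operators, because $\sigma_k$ is not a tensor product; one needs an operator-valued (parameter-dependent) one-dimensional theorem with constants uniform in the sharp range, which is precisely the hard part of the Carbery--Seeger machinery. The paper proceeds quite differently and self-containedly: a pointwise bound $|\Delta_{j_1}^{1}\cdots\Delta_{j_n}^{n}T_\sigma f|\le C\,K\,[M^{(1)}\cdots M^{(n)}(|\Delta_J^{\theta}f|^\rho)]^{1/\rho}$ obtained by H\"older against the weight $\prod_\ell(1+2^{j_\ell}|x_\ell-y_\ell|)^{\gamma_\ell}$ together with Hausdorff--Young and Kato--Ponce (Lemma~\ref{L:lemma}); this plus Fefferman--Stein gives all $1<p<\infty$ under the \emph{stronger} hypothesis $\gamma_\ell>\max(1/2,1/r)$ (Proposition~\ref{P:first_endpoint}); the sharp range is then recovered by complex interpolation of an analytic family of multipliers $\sigma_z$ between $p_0=2$ (where only Sobolev embedding into $L^\infty$ is needed) and $p_1$ near $1$ (Theorem~\ref{interpL}). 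As written, your central step is an assertion, not a proof, and the two strategies you name for it would not preserve the sharp range.

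The optimality part has a smaller but real gap at the endpoint. Your tensor reduction $\sigma(\xi)=\sigma_0(\xi_{\ell_0})$ is sound and condition~\eqref{20000} does factor as you say, but it presupposes a one-dimensional $\sigma_0$ satisfying the scale-invariant Sobolev condition of order $\gamma_{\ell_0}$ in $L^r$ that fails to be an $L^p$ multiplier \emph{at} $|1/p-1/2|=\gamma_{\ell_0}$. The classical Hirschman--Wainger--Miyachi symbols $e^{i|\xi|^a}|\xi|^{-b}$ only rule out the open range $|1/p-1/2|>\gamma$ (at the critical exponent they are bounded), so they do not force the strict inequality. The paper instead uses the explicit symbol $\varphi(|\eta|)e^{-\xi^2/2}|\eta|^{i\xi}(\log|\eta|)^{-\alpha}$ on $\R\times\R^{n-1}$, whose logarithmic decay is tuned to fail exactly at $\alpha=|1/p-1/2|$, verified by testing against a function with $\wh f=e^{-\xi^2/2}\varphi(|\eta|)|\eta|^{(1-n)/2}(\log|\eta|)^{-1/2}(\log\log|\eta|)^{-\beta}$ in the mixed-norm space $L^p(\R;L^2(\R^{n-1}))$. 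To keep your reduction you would need to quote or construct a comparably log-refined one-dimensional endpoint example.
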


Carbery~\cite{C} first formulated a version of Theorem~\ref{ThmMain} in which the 
 multiplier lies in a product-type $L^2$-based Sobolev space.   
 Carbery and Seeger~\cite[Remark after Prop. 6.1]{CSTAMS} obtained 
 Theorem~\ref{ThmMain} in the case when 
 $\gamma_1=\cdots = \gamma_n >\big| \f1p -\f12\big|=\f1r$.
  The positive direction of their result also  appeared in~\cite[Condition (1.4)]{CS} but this time   the range of $p$ is
 $\big| \f1p -\f12\big| < \f1r$ and is  
  expressed in  terms of the integrability of the  multiplier and not in terms of its smoothness. In our 
  opinion  it is more natural,  nonetheless, to 
 state condition \eqref{Marc-con} in terms of the smoothness of the multiplier, as in the case of the 
 sharp version of the H\"ormander multiplier theorem, see  \cite{GrafSlav1}.  

The class of multipliers which satisfies the assumptions of Theorem~\ref{ThmMain} is strictly larger than the set of multipliers treated by the version of the H\"ormander multiplier theorem due to Calder\'on and Torchinsky~\cite[Theorem 4.6]{CT}; see also
\cite{GraHeHonNg1}. Since   Theorem~\ref{ThmMain} assumes the multiplier $\sigma$ to have $1/r+\varepsilon$ derivatives in each variable, while the H\"ormander multiplier theorem requires more than $n/r$ derivatives in all variables, it is apparent that there are multipliers which can be treated by Theorem~\ref{ThmMain}, but not by~\cite[Theorem 4.6]{CT}. On the other hand, it is an easy consequence of the following theorem that every multiplier satisfying the assumptions of the H\"ormander multiplier theorem falls under the scope of Theorem~\ref{ThmMain} as well.

\begin{theorem}\label{T:comparison}
Let $\psi$ be a Schwartz function on the line whose Fourier transform is supported in the set $\{\xi: \frac{1}{2}\leq|\xi|\leq 2\}$ and which satisfies $\sum_{k\in \mathbb Z} \wh{\psi}(2^k \xi) =1$ for every $\xi \neq 0$. Also, let $\Phi$ be a Schwartz function on $\R^n$ having analogous properties. If $1<r<\infty$ and $\gamma_1,\dots,\gamma_n$ are real numbers larger than $\frac{1}{r}$, then 
\begin{align}\label{E:inequality}
&\sup_{j_1,\dots,j_n\in \mathbb Z} \left\|(I-\partial_1^2)^{\frac{\gamma_1}{2}} \cdots (I-\partial_n^2)^{\frac{\gamma_n}{2}}\bigg[\sigma(2^{j_1}\xi_1,\dots,2^{j_n}\xi_n) \prod_{\ell=1}^n \wh{\psi}(\xi_\ell)\bigg]\right\|_{L^r}\\
&\leq C\sup_{j\in \mathbb Z} \left\|(I-\Delta)^{\frac{\gamma_{1}+\dots+\gamma_n}{2}} \bigg[\sigma(2^{j}\xi) \wh{\Phi}(\xi)\bigg]\right\|_{L^r}. 
\nonumber
\end{align}
\end{theorem}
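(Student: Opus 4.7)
The plan is to prove \eqref{E:inequality} in three steps: first a Mikhlin-type reduction of the product Sobolev norm on the left to an isotropic one of total order $\gamma = \gamma_1 + \cdots + \gamma_n$; second, a dyadic decomposition based on the isotropic partition of unity associated with $\widehat{\Phi}$; and third, a per-piece bound that handles the anisotropic rescaling of the product dyadic rectangle uniformly.

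For the first step, set $\gamma = \gamma_1+\cdots+\gamma_n$ and observe that the function
\[
m(x) \;=\; \frac{\prod_{\ell=1}^n (1+4\pi^2 x_\ell^2)^{\gamma_\ell/2}}{(1+4\pi^2 |x|^2)^{\gamma/2}}
\]
is smooth, bounded on $\mathbb{R}^n$, and satisfies the Mikhlin derivative bounds $|\partial^\alpha m(x)| \le C_\alpha |x|^{-|\alpha|}$ away from the origin; this is a routine Leibniz computation. Mikhlin's multiplier theorem then yields
$\|\prod_\ell (I-\partial_\ell^2)^{\gamma_\ell/2} f\|_{L^r} \le C\,\|(I-\Delta)^{\gamma/2} f\|_{L^r}$ for $1<r<\infty$, which reduces the left-hand side of \eqref{E:inequality} to $\sup_{j_1,\dots,j_n}\|(I-\Delta_\xi)^{\gamma/2}[\sigma(2^{j_1}\xi_1,\dots,2^{j_n}\xi_n)\prod_\ell \widehat{\psi}(\xi_\ell)]\|_{L^r}$.

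For the second step, insert the partition of unity $\sum_{k\in\mathbb{Z}}\widehat{\Phi}(2^{-k}\eta)=1$ at the point $\eta=(2^{j_\ell}\xi_\ell)_\ell$. On the support of $\prod_\ell \widehat{\psi}(\xi_\ell)\subset[-2,2]^n$ we have $|\eta|\sim 2^{j^*}$ with $j^* = \max_\ell j_\ell$, so only $O_n(1)$ terms (those with $|k-j^*|$ bounded by a constant depending on $n$) contribute. Each such piece has the form $G_k(L_k\xi)\prod_\ell\widehat{\psi}(\xi_\ell)$, where $G_k(\zeta):=\sigma(2^k\zeta)\widehat{\Phi}(\zeta)$ is exactly the function whose isotropic Sobolev norm appears on the right-hand side of \eqref{E:inequality}, and $L_k=\mathrm{diag}(2^{j_\ell-k})$ is a diagonal matrix with entries $m_\ell \in (0,2^{c_n}]$. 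It suffices to bound each piece by $C\,\|(I-\Delta_\zeta)^{\gamma/2}G_k\|_{L^r}$.

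For the third step, change variables $\zeta=L_k\xi$; a short Fourier computation shows that $(I-\Delta_\xi)^{\gamma/2}$ corresponds on the $\zeta$-side to the multiplier with symbol $(1+4\pi^2|L_k y|^2)^{\gamma/2}$, and taking $L^r$ norms introduces a Jacobian factor $(\prod_\ell m_\ell)^{-1/r}$. Because $|L_k y|\le C|y|$ uniformly, the ratio $(1+|L_k y|^2)^{\gamma/2}/(1+|y|^2)^{\gamma/2}$ is a Mikhlin multiplier uniformly in $k$, and hence may be replaced by $(I-\Delta_\zeta)^{\gamma/2}$. The main obstacle is that the tensor-product cutoff $\prod_\ell\widehat{\psi}(\xi_\ell)$ becomes $\prod_\ell\widehat{\psi}(\zeta_\ell/m_\ell)$ after the change of variables, a bump living at the anisotropic scales $m_\ell$; controlling
$\|(I-\Delta_\zeta)^{\gamma/2}[G_k(\zeta)\prod_\ell\widehat{\psi}(\zeta_\ell/m_\ell)]\|_{L^r}$
by $(\prod_\ell m_\ell)^{1/r}\|(I-\Delta)^{\gamma/2}G_k\|_{L^r}$ is precisely what absorbs the Jacobian factor. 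This is achieved by a fractional Leibniz / Kato-Ponce estimate that exploits the compact support of $G_k$ in the annulus $\{|\zeta|\in[1/2,2]\}$ together with the Sobolev embedding $L^r_\gamma \hookrightarrow L^\infty$, which is available since the hypothesis $\gamma_\ell > 1/r$ forces $\gamma > n/r$; a case distinction based on which coordinates $m_\ell$ are small then completes the bound.
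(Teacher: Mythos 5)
Your steps 1 and 2 are sound in outcome, though the stated justification of step 1 is off: with $\gamma=\gamma_1+\cdots+\gamma_n$, the function $m(x)=\prod_\ell(1+4\pi^2x_\ell^2)^{\gamma_\ell/2}(1+4\pi^2|x|^2)^{-\gamma/2}$ does \emph{not} satisfy the isotropic Mikhlin bounds when some $\gamma_\ell<1$ (for $n=2$ one has $|\partial_1 m(1,R)|\sim R^{-\gamma_1}\not\lesssim R^{-1}$); what it does satisfy, and what you should invoke, is the product-type Marcinkiewicz condition $|\partial^\alpha m|\lesssim\prod_\ell|x_\ell|^{-\alpha_\ell}$. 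The genuine gap is in step 3: the inequality you declare to be "precisely what absorbs the Jacobian factor",
\begin{equation*}
\Big\|(I-\Delta_\zeta)^{\frac{\gamma}{2}}\Big[G_k(\zeta)\textstyle\prod_\ell\wh{\psi}(\zeta_\ell/m_\ell)\Big]\Big\|_{L^r}\leq C\Big(\prod_\ell m_\ell\Big)^{\frac1r}\big\|(I-\Delta)^{\frac{\gamma}{2}}G_k\big\|_{L^r},
\end{equation*}
is false, so no Kato--Ponce/Leibniz argument or case distinction can establish it. Take $n=2$, $m_2\sim 1$, $m_1=m\to0$, and let $G_k$ be a fixed smooth bump bounded below near a point $(0,t_0)$ of the annulus with $\wh{\psi}(t_0)\neq0$. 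The function on the left contains the factor $\wh{\psi}(\zeta_1/m)$, which oscillates at scale $m$ in $\zeta_1$; since $|2\pi y_1|^{\gamma}(1+4\pi^2|y|^2)^{-\gamma/2}$ is a bounded multiplier, the left side dominates $\|(-\partial_1^2)^{\gamma/2}[G_k\prod_\ell\wh\psi(\cdot/m_\ell)]\|_{L^r}\gtrsim m^{\frac1r-\gamma}$ by one-dimensional scaling, while the right side is $Cm^{\frac1r}$; the ratio blows up like $m^{-\gamma}$.

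The error enters when you replace the anisotropic symbol $(1+4\pi^2|L_ky|^2)^{\gamma/2}$ by the larger isotropic one: that upper bound is legitimate, but it discards exactly the gain $m_\ell^{\gamma}$ (for derivatives falling on the $\ell$-th variable) that compensates the $m_\ell^{-\gamma}$ produced by differentiating $\wh{\psi}(\zeta_\ell/m_\ell)$. Put differently, in the original $\xi$-variables the cutoff $\prod_\ell\wh{\psi}(\xi_\ell)$ lives at unit scale and $G_k(L_k\xi)$ is a contraction of $G_k$, which only helps; your change of variables transfers the anisotropy onto the cutoff, where it hurts, and you then throw away the compensating anisotropy of the operator. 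The paper avoids this trap by never reducing the left-hand side to a single isotropic operator: it keeps $\prod_\ell(I-\partial_\ell^2)^{\gamma_\ell/2}$, splits each factor as $I+(-\partial_\ell^2)^{\gamma_\ell/2}$, and applies the one-dimensional estimates of Lemma~\ref{L:identity} variable by variable; there the dilation by $2^{j_\ell-j+a}\leq 2^n$ produces only the harmless factor $1+2^{(j_\ell-j+a)(\gamma_\ell-1/r)}\leq C$, precisely because a homogeneous derivative in the single variable $\xi_\ell$ scales with the dilation in that variable. If you wish to salvage your route, you must retain the operator with symbol $(1+4\pi^2|L_ky|^2)^{\gamma/2}$ after the change of variables; proving the resulting estimate then reduces to essentially the same per-variable analysis.
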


\section{The proof of Theorem~\ref{ThmMain}; The main estimate}

Let us start this section by introducing some notation that will be used throughout the paper. We will use $\psi$ to denote the bump from Theorem~\ref{ThmMain}; further, $\theta$ will stand for the function on the line satisfying
$$
\wh{\theta}(\eta) =  \wh{\psi}(\eta/2)+ \wh{\psi}(\eta )+\wh{\psi}(2\eta ).
$$
One can observe that $\wh{\theta}$ is supported in $\{\frac{1}{4} \leq |\xi| \leq 4\}$ and $\wh{\theta}=1$ on the support of $\wh{\psi}$.

To simplify the notation, if $\xi=(\xi_1,\dots,\xi_n)\in \mathbb R^n$ and $J=(j_1,\dots,j_n)\in \mathbb Z^n$, we shall write
$$
2^J\xi=\big(2^{j_1} \xi_{1} , \dots , 2^{j_n} \xi_{n} \big)
$$
and
$$
\wh{\psi}(\xi) = \prod_{\ell=1}^n  \widehat{\psi}(\xi_{\ell}), \quad \wh{\theta}(\xi) = \prod_{\ell=1}^n  \widehat{\theta}(\xi_{\ell}).
$$

Let $k\in 1,\dots,n$. For $j\in \mathbb Z$ we define the Littlewood-Paley operators associated to the bumps $\psi$ and $\theta$ by
$$ 
\De^{k}_j f = \De^{\psi,k}_j f = \int_{\mathbb R} f(x_1,\dots,x_{k-1}, x_k-y,x_{k+1}, \dots, x_n) 2^{j} \psi (2^j y) dy
$$
and
$$ 
\De^{\theta,k}_j f = \int_{\mathbb R} f(x_1,\dots,x_{k-1}, x_k-y,x_{k+1}, \dots, x_n) 2^{j} \theta (2^j y) dy.
$$

We begin with the following lemma:

\begin{lemma}\label{L:lemma}
Let $1\le r\le \infty $, let $1\le \rho<2$ satisfy $1\leq \rho\leq r$ and let $\gamma_1, \dots ,\gamma_n$ be real numbers such that $\gamma_\ell\rho>n_\ell$, $\ell=1,\dots,n$. Then, for any function $f$ on $\R^n$ and for all integers $j_1,\dots j_n$, we have
\begin{equation}\label{E:estimate}
|\De_{j_1}^{1} \cdots \De_{j_n}^{n} T_\si(f)| \le C\, K \, 
\Big[M^{(1)}\cdots M^{(n)} (|\De_{j_1}^{\theta,1} \cdots \De_{j_n}^{\theta,n} f|^\rho) \Big]^{\f 1\rho} , 
\end{equation}
where $M^{(\ell)}$ denotes the one-dimensional Hardy-Littlewood maximal operator in the $\ell$-th coordinate and 
$$
K = \sup_{j_1,\dots , j_n\in \mathbb Z} \bigg\| (I-\partial_{1}^2)^{\frac{\gamma_1}2} \cdots  (I-\partial_{n}^2)^{\frac{\gamma_n}2}  
\Big[ \sigma \big(2^{j_1} \xi_{1} , \dots , 2^{j_n} \xi_{n} \big) \widehat{\psi}(\xi_{1}) \cdots  
\widehat{\psi}(\xi_{n})  \Big]  \bigg\|_{L^r}.
$$
\end{lemma}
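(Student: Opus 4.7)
The plan is to reduce the estimate to a weighted convolution, extract a product of one-dimensional maximal functions from the weight side, and bound the ``kernel side'' by the Sobolev-type quantity $K$. Because $\wh\theta\equiv 1$ on $\supp\wh\psi$ in each coordinate, the Fourier support of $\De_{j_1}^{1}\cdots\De_{j_n}^{n}T_\si f$ is contained in the rectangle on which $\prod_\ell\wh\theta(\xi_\ell/2^{j_\ell})=1$, and hence
\[
\De_{j_1}^{1}\cdots\De_{j_n}^{n}T_\si f\;=\;K_J\ast g_J,\qquad g_J:=\De_{j_1}^{\theta,1}\cdots\De_{j_n}^{\theta,n}f,
\]
where $K_J$ is the inverse Fourier transform of $\si(\xi)\prod_\ell\wh\psi(\xi_\ell/2^{j_\ell})$. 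The change of variables $\xi=2^J\eta$ gives $K_J(y)=2^{|J|}(m_J)^{\vee}(2^Jy)$ with $m_J(\eta)=\si(2^J\eta)\prod_\ell\wh\psi(\eta_\ell)$, a function supported in a fixed box independent of $J$ whose Sobolev norm is precisely the quantity appearing in $K$.

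Introduce $M_J=(I-\p_1^2)^{\gamma_1/2}\cdots(I-\p_n^2)^{\gamma_n/2}m_J$ and the separated weight $w(z)=\prod_\ell(1+4\pi^2z_\ell^2)^{\gamma_\ell/2}$, so that $(m_J)^{\vee}=w^{-1}(M_J)^{\vee}$ pointwise. Writing $K_J\ast g_J(x)$ as an integral in the variable $z=2^Jy$ and applying H\"older with exponents $\rho'$ and $\rho$,
\[
|K_J\ast g_J(x)|\;\le\;\bigl\|(M_J)^{\vee}\bigr\|_{L^{\rho'}}\,\Bigl(\int w(z)^{-\rho}\bigl|g_J(x-2^{-J}z)\bigr|^\rho\,dz\Bigr)^{1/\rho}.
\]
The second factor is handled by a dyadic layer decomposition of $w^{-\rho}$ in each coordinate: the one-dimensional tail sums converge exactly because $\gamma_\ell\rho>1$, and after undoing the substitution the factor is bounded by $C[M^{(1)}\cdots M^{(n)}(|g_J|^\rho)(x)]^{1/\rho}$. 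The first factor is reduced, via Hausdorff--Young (valid since $1\le\rho<2$), to $\|M_J\|_{L^\rho}$.

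It remains to show $\|M_J\|_{L^\rho}\le CK$ uniformly in $J$. I would split $M_J=\chi_QM_J+(1-\chi_Q)M_J$ for a fixed enlargement $Q$ of $\supp m_J$. On $Q$, H\"older in a finite-measure set combined with $\rho\le r$ gives $\|\chi_QM_J\|_{L^\rho}\le|Q|^{1/\rho-1/r}\|M_J\|_{L^r}\le CK$. Off $Q$, the pseudolocality of each one-dimensional operator $(I-\p_\ell^2)^{\gamma_\ell/2}$ --- whose distributional kernel is rapidly decreasing away from the diagonal --- yields a pointwise bound of the form $|M_J(\eta)|\le C_N(1+\operatorname{dist}(\eta,\supp m_J))^{-N}\|m_J\|_{L^1}$, and $\|m_J\|_{L^1}$ is controlled by $K$ using the $L^r$-boundedness of the inverse product Bessel potential applied to $M_J$ together with the compact support of $m_J$; the resulting tail is summable. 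The main obstacle is precisely this last step, since $(I-\p_\ell^2)^{\gamma_\ell/2}$ does not preserve compact support, so a direct H\"older bound is unavailable and one must track the off-support decay uniformly in $J$. By contrast, the weighted H\"older and maximal-function reduction in the previous paragraph is a routine multi-parameter analogue of the one-dimensional estimate behind the classical H\"ormander multiplier theorem.
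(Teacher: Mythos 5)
Your proposal follows the paper's proof almost step for step: the same reduction $\De_{j_1}^{1}\cdots\De_{j_n}^{n}T_\si f = K_J * \De_J^{\theta}f$ via $\wh{\theta}\equiv 1$ on $\supp\wh{\psi}$, the same H\"older split against the product weight $\prod_\ell(1+4\pi^2 z_\ell^2)^{\gamma_\ell/2}$, the same maximal-function bound for the weighted factor (the dyadic-layer argument you describe is exactly the content of the one-dimensional estimate the paper cites, valid precisely when $\gamma_\ell\rho>1$), and the same Hausdorff--Young reduction of the kernel factor to $\|\Gamma(\gamma_1,\dots,\gamma_n)[\si(2^J\cdot)\wh{\psi}]\|_{L^\rho}$. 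The only genuine divergence is the final passage from the $L^\rho$ norm to the $L^r$ norm (i.e.\ to $K$) when $\rho<r$: the paper disposes of this in one line by citing the Kato--Ponce inequality, whereas you split into a neighborhood $Q$ of the fixed support of $m_J=\si(2^J\cdot)\wh{\psi}$ (where H\"older with $\rho\le r$ applies) and its complement, where you use the pseudolocality of $(I-\p_\ell^2)^{\gamma_\ell/2}$ together with $\|m_J\|_{L^1}\lesssim\|m_J\|_{L^r}\lesssim K$ (the inverse Bessel potentials being $L^r$-bounded and $m_J$ compactly supported in a $J$-independent set). Your route is more elementary and self-contained, at the cost of having to verify the off-support decay estimate uniformly in $J$ -- which you correctly identify as the delicate point, and which does go through by factoring $(I-\p^2)^{\gamma/2}=(I-\p^2)^{N}(I-\p^2)^{\gamma/2-N}$ and using the exponential decay of the Bessel kernel and its derivatives away from the origin. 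Either way the step is sound, so the proposal is correct.
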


\begin{proof}
Throughout the proof we shall use the notation introduced at the beginning of this section and, whenever $J=(j_1,\dots,j_n)$, we shall write 
$$
\Delta_J f=\Delta_{j_1}^{1}\cdots \Delta_{j_n}^{n} f, \quad \Delta_J^{\theta} f=\Delta_{j_1}^{\theta,1}\cdots \Delta_{j_n}^{\theta,n} f.
$$

Since $\wh{\theta}$ is equal to $1$ on the support of $\wh{\psi}$, we have 
\begin{align*}
&\De_{J} T_\si(f)(x_1,\dots, x_n) 
= \int_{\R^n} \wh{f}(\xi) \wh{\psi}(2^{-J} \xi) \si(\xi) e^{2\pi i x\cdot \xi}d\xi\\
&= \int_{\R^n} \wh{f}(\xi) \wh{\theta}(2^{-J} \xi) \wh{\psi}(2^{-J}\xi) \si(\xi) e^{2\pi i x\cdot \xi}d\xi
=\int_{\R^n} (\De_{J}^{\theta} f)\sphat{}\,(\xi) \wh{\psi}(2^{-J} \xi)  \si(\xi) e^{2\pi i x\cdot \xi}d\xi\\
&=\int_{\R^n} 2^{j_1+\cdots +j_n} (\De_{J}^{\theta} f)\sphat{}\,(2^{J} \xi') \wh{\psi}(\xi')  \si(2^{J} \xi') e^{2\pi i(2^{J}x \cdot \xi')} d\xi'\\
&=2^{j_1+\cdots +j_n} \int_{\R^n} (\De_{J}^{\theta} f)(y) \left[\wh{\psi}(\xi')  \si(2^{J} \xi')\right]\sphat\,  (2^{J}(x-y)) dy\\
&=\int_{\R^n}   \frac{2^{j_1+\cdots +j_n} (\De_{J}^{\theta} f)(y)}{\prod_{\ell=1}^n(1+2^{j_\ell}|x_{\ell}-y_{\ell}|)^{\gamma_\ell}}  
  \cdot \prod_{\ell=1}^n (1+2^{j_\ell}|x_{\ell}-y_{\ell}|)^{\gamma_\ell}
\left[\wh{\psi}(\xi)  \si(2^{J} \xi)\right]\sphat\,  (2^{J}(x-y)) \, dy.
\end{align*}
H\"older's inequality thus yields that 
 $|\De_{J} T_\si(f)(x)|$ is bounded by 
\begin{align*} 
&\bigg(\int_{\R^n} 2^{j_1+\cdots +j_n} \frac{|(\De_{J}^{\theta} f)(y)|^\rho}{\prod_{\ell=1}^n(1+2^{j_\ell}|x_{\ell}-y_{\ell}|)^{\gamma_\ell \rho}}\, dy\bigg)^{\frac{1}{\rho}}\\
&\cdot
\bigg(\int_{\R^n} 2^{j_1+\cdots +j_n} \bigg|\prod_{\ell=1}^n(1+2^{j_\ell}|x_{\ell}-y_{\ell}|)^{\gamma_\ell}
\cdot \bigg[\wh{\psi}(\xi)  \si(2^{J}\xi)\bigg]\sphat\,  (2^{J}(x-y))\bigg|^{\rho'} \, dy\bigg)^{\frac{1}{\rho'}},
\end{align*}
where, when $\rho=1$, the second term in the product is to be interpreted as  
$$
\bigg\|\prod_{\ell=1}^n(1+2^{j_\ell}|x_{\ell}-y_{\ell}|)^{\gamma_\ell}
\cdot \bigg[\wh{\psi}(\xi)  \si(2^{J}\xi)\bigg]\sphat\,  (2^{J}(x-y))\bigg\|_{L^\infty} .
$$

Since $\gamma_\ell \rho >n_\ell$ for all $\ell=1,\dots,n$, $n$ consecutive applications of~\cite[Theorem 2.1.10]{CFA} yield the estimate  
\begin{align}
 \bigg( \int_{\R^n} 2^{j_1+\cdots +j_n} \frac{|(\De_{J}^{\theta} f)(y)|^\rho}{\prod_{\ell=1}^n(1+2^{j_\ell}|x_{\ell}-y_{\ell}|)^{\gamma_\ell \rho}} \, dy\bigg)^{\frac 1\rho}   \nonumber   
\leq C \Big[ M^{(1)} \cdots M^{(n)} \big( |\De_{J}^{\theta}  f|^\rho\big)  (x) \Big]^{\f 1\rho}. 
\end{align}

We now write
\begin{align*} 
&\bigg(\int_{\R^n} 2^{j_1+\cdots +j_n} \bigg|\prod_{\ell=1}^n(1+2^{j_\ell}|x_{\ell}-y_{\ell}|)^{\gamma_\ell} 
\bigg[\wh{\psi}(\xi)\si(2^{J} \xi)\bigg]\sphat\,  (2^{J}(x-y))\bigg|^{\rho'} \!\! dy\!\!\bigg)^{\frac{1}{\rho'}}\\
&\leq   \bigg(\int_{\R^n} \bigg|\prod_{\ell=1}^n(1+|y_{\ell}|^2)^{\frac{\gamma_\ell}{2}}
\bigg[\wh{\psi}(\xi)   \si(2^{J} \xi)\bigg]\sphat\,  (y)\bigg|^{\rho'} \, dy \bigg)^{\frac{1}{\rho'}}\\
&\leq   \bigg\| (I-\partial_{1}^2)^{\frac{\gamma_1}2} \cdots  (I-\partial_{n}^2)^{\frac{\gamma_n}2}  
\Big[ \sigma \big(2^J \xi\big) \widehat{\psi}(\xi)  \Big]  \bigg\|_{L^\rho}
\leq C K .  
\end{align*}
Notice that the second inequality is   the Hausdorff-Young inequality and the last inequality 
is a consequence of the Kato-Ponce inequality \cite{GOh} (if $\rho<r$).
A combination of the preceding estimates yields \eqref{E:estimate}.
\end{proof}

\begin{prop}\label{P:first_endpoint}
Let $1\leq r\leq \infty$ and let $\gamma_\ell>\max\{1/2, 1/r\}$, $\ell=1,\dots,n$. If a function $\sigma$ on $\R^n$ satisfies~\eqref{20000}, then $T_\sigma$ admits a bounded extension from $L^p(\R^n)$ to itself for all $1<p<\infty$. 
\end{prop}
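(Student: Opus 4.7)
The plan is to control $\|T_\sigma f\|_{L^p}$ via the product-type Littlewood--Paley square function $Sg := \bigl(\sum_{J \in \mathbb{Z}^n} |\Delta_J g|^2\bigr)^{1/2}$, which satisfies $\|g\|_{L^p} \approx \|Sg\|_{L^p}$ for every $1 < p < \infty$ (iterated one-dimensional Littlewood--Paley plus Fefferman--Stein). Before starting, I observe that $T_\sigma^* = T_{\bar\sigma}$ and that $\bar\sigma$ satisfies \eqref{20000} with the same value of the supremum, so it suffices to prove boundedness for $p \ge 2$; the range $1 < p < 2$ will then follow by duality.

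Fix $p \ge 2$. The hypothesis $\gamma_\ell > \max\{1/2, 1/r\}$ permits a choice of $\rho$ with $1 \le \rho < \min\{2, r\}$, hence $\rho < p$, such that $\gamma_\ell \rho > 1$ for all $\ell$: take $\rho$ slightly below $2$ if $r \ge 2$, and $\rho$ between $1/\gamma_\ell$ and $r$ (a nonempty interval since $\gamma_\ell r > 1$) if $r < 2$. For this $\rho$, Lemma~\ref{L:lemma} yields pointwise
\[
|\Delta_J T_\sigma f(x)| \le CK \bigl[M^{(1)}\cdots M^{(n)}(|\Delta_J^\theta f|^\rho)(x)\bigr]^{1/\rho}, \qquad J \in \mathbb{Z}^n.
\]
Squaring, summing over $J$, and taking the $L^p$-norm of the resulting square function,
\[
\|S T_\sigma f\|_{L^p} \le CK \bigg\|\bigg(\sum_{J \in \mathbb{Z}^n} [M^{(1)}\cdots M^{(n)} g_J]^{q}\bigg)^{1/q}\bigg\|_{L^{p/\rho}}^{1/\rho},
\]
where $g_J := |\Delta_J^\theta f|^\rho$ and $q := 2/\rho$; since $\rho < 2$ and $\rho < p$, both $q$ and $p/\rho$ lie in $(1,\infty)$.

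The iterated Fefferman--Stein vector-valued maximal inequality, applied to the countable index set $\mathbb{Z}^n$ one coordinate at a time, bounds the right-hand side by $\|(\sum_J g_J^q)^{1/q}\|_{L^{p/\rho}}^{1/\rho}$, which equals $\|(\sum_J |\Delta_J^\theta f|^2)^{1/2}\|_{L^p}$ by a direct rewrite of the $L^p$-norm. Finally, the identity $\wh\theta(2^{-j}\eta) = \wh\psi(2^{-(j-1)}\eta) + \wh\psi(2^{-j}\eta) + \wh\psi(2^{-(j+1)}\eta)$ expresses $\Delta_J^\theta$ as a sum of $3^n$ shifted tensor products of the $\Delta_{j_\ell}^\psi$; Cauchy--Schwarz together with a reindexing then gives the pointwise bound $\sum_J |\Delta_J^\theta f|^2 \le 9^n \sum_J |\Delta_J^\psi f|^2$, and the reverse Littlewood--Paley inequality for $\psi$ closes the loop to produce $\|T_\sigma f\|_{L^p} \le CK \|f\|_{L^p}$. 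The only real obstacle is index bookkeeping: the four constraints $1 \le \rho$, $\rho < 2$, $\rho \le r$, $\gamma_\ell \rho > 1$ are precisely where both lower bounds on $\gamma_\ell$ are needed, and the restriction $p \ge 2$--imposed so that the constraint $\rho \ge 1$ of Lemma~\ref{L:lemma} does not conflict with the constraint $p/\rho > 1$ of Fefferman--Stein--is exactly what forces the initial duality reduction.
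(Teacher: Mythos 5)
Your proof is correct and follows essentially the same route as the paper's: choose $\rho\in[1,2)$ with $\rho\le r$ and $\gamma_\ell\rho>1$, feed Lemma~\ref{L:lemma} into the product Littlewood--Paley square function, apply the iterated Fefferman--Stein inequality, and dualize for $p<2$ (the paper isolates $p=2$ via Plancherel, whereas your choice of exponents $q=2/\rho$, $p/\rho>1$ legitimately absorbs $p=2$ into the main argument). One small slip: you require $\rho<\min\{2,r\}$ strictly, which is impossible when $r=1$; since Lemma~\ref{L:lemma} only needs $\rho\le r$, take $\rho=1$ in that case (note $\gamma_\ell>1$ there), and the rest goes through unchanged.
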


\begin{proof}
Suppose first that $p> 2$. Since $\gamma_\ell>\max\{1/2, 1/r\}$, $\ell=1,\dots,n$, we can find $\rho \in [1,2)$ 
such that $\rho \leq r$ and $\rho\gamma_\ell>1$, $\ell=1,\dots,n$. Then
\begin{align*}
\big\| T_\si(f) \big\|_{L^p(\R^n)} 
&\leq C_p(n) \Big\| \Big( \sum_{j_1,\dots,j_n \in \mathbb Z} |\De_{j_1}^{1}\cdots \De_{j_n}^{n} T_\si (f) |^2 \Big)^{\f12} \Big\|_{L^p}\\ 
&\le C_p(n)  K \Big\| \Big( \sum_{j_1,\dots,j_n \in \mathbb Z}  \Big[M^{(1)}\cdots M^{(n)} (|\De_{j_1}^{\theta,1} \cdots \De_{j_n}^{\theta,n} f|^\rho) \Big]^{\f 2 \rho}     \Big)^{\f12} \Big\|_{L^p}\\
&\leq C_p(n)  K \Big\| \Big( \sum_{j_1,\dots,j_n \in \mathbb Z}  |\De_{j_1}^{\theta,1} \cdots \De_{j_n}^{\theta,n} f|^2 \Big)^{\f12} \Big\|_{L^p}\\
&\leq C_p(n)  K \|f\|_{L^p}.
\end{align*}
Notice that the second inequality follows from Lemma~\ref{L:lemma} and the third inequality is obtained by applying the Fefferman-Stein inequality \cite{FS} on the Lebesgue space  $L^{\frac{p}{2}}$  in each of the variables $y_{1},\dots, y_{n}$. Observe that the Fefferman-Stein inequality makes use of the assumptions $2/\rho>1$ and $p/2> 1$.

The case $1<p<2$ follows by a duality argument, while the case $p=2$ is a consequence of  
 Plancherel's theorem and of a Sobolev embedding into $L^\infty$.
\end{proof}

\section{The proof of Theorem~\ref{ThmMain}; An interpolation argument}
 
When $p=2$ no derivatives are required of $\si$ for $T_\si$ to be bounded. To mitigate the effect of the 
requirement of the derivatives of $\si$ for $T_\si$ to be bounded  on $L^p$ for $p\neq 2$,  we 
apply an interpolation argument between $p=2$ and $p=1+\de$.

We shall use the notation introduced at the beginning of the previous section, and we shall denote 
$$
\Gamma\big(\{s_\ell\}_{\ell=1}^n\big) =\Gamma(s_1,\dots , s_n)= (I-\partial_1^2)^{\frac{s_1}2} \cdots  (I-\partial_n^2)^{\frac{s_n}2} .
$$
The following result will be the key interpolation estimate: 

\begin{theorem}\label{interpL} 
 Fix $1<p_0,p_1,r_0,r_1<\infty$, $0<  s_1^0, \dots , s_n^0, s_1^1, \dots , s_n^1<\infty$. Suppose that 
 $r_0s_\ell^0>1$ and $r_1s_\ell^1>1$ for all $\ell=1,\dots , n$.
 Let $ \psi $ be as before. 
  Assume that for $k\in\{0,1\}$ we have
  \begin{equation}
    \norm{T_\sigma(f)}_{L^{p_k}}\le K_k 
     \sup_{j_1,\dots , j_n\in \mathbb Z} \bigg\|  \Gamma (s_1^k,\dots , s_n^k)
\Big[ \sigma  (2^J \xi  ) \prod_{\ell=1}^n  
\widehat{\psi}(\xi_{\ell})  \Big]  \bigg\|_{L^{r_k}} 
    \norm{f}_{L^{p_k}}
  \end{equation}
  for   all $f\in \mathscr C_0^\nf(\mathbb R^n)$. 
For $0<\theta<1$ and $\ell=1,\dots , n$ define 
  $$
  \frac 1p  =\frac{1-\theta}{p_0} + \frac{ \theta}{p_1} , \quad
       \frac 1r =\frac{1-\theta}{r_0} + \frac{ \theta}{r_1} , \quad
          s_\ell =(1-\theta)s_\ell^0 +   \theta s_\ell^1  .
  $$
Then  there is  a constant $C_* $ such that for all $f\in \mathscr C_0^\nf(\mathbb R^n)$
we have
  \begin{equation}
    \norm{T_\sigma(f)}_{L^{p} }\le C_* K_0^{1-\theta} K_1^{\theta}  
     \sup_{j_1,\dots , j_n\in \mathbb Z} \bigg\|  \Gamma ( s_1 ,\dots , s_n )
\Big[ \sigma  (2^J \xi  ) \prod_{\ell=1}^n  
\widehat{\psi}(\xi_{\ell})  \Big]  \bigg\|_{L^{r }} \|f\|_{L^p}.
  \end{equation} 
\end{theorem}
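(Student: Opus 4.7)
\emph{Plan.} The strategy is to apply Stein's complex interpolation theorem to an analytic family of multiplier operators $\{T_{\sigma^z}\}$ on the strip $\{0\le \mathrm{Re}(z)\le 1\}$. Introduce the linear interpolants
\[
s_\ell(z) := (1-z) s_\ell^0 + z s_\ell^1, \qquad \tfrac{1}{r(z)} := \tfrac{1-z}{r_0} + \tfrac{z}{r_1},
\]
so that $s_\ell(\theta)=s_\ell$ and $r(\theta)=r$. After normalizing the right-hand side so that $\sup_J \|\Gamma(s_1,\dots,s_n)\sigma_J\|_{L^r}=1$, set $g_J := \Gamma(s_1,\dots,s_n)\sigma_J$; then $\|g_J\|_{L^r}\le 1$ uniformly in $J \in \mathbb Z^n$.

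Next, I would construct the analytic family at the ``lifted'' level by the classical Riesz--Thorin sign trick. For each $J$ put
\[
g_J^z(\xi) := |g_J(\xi)|^{r/r(z)}\,\sgn g_J(\xi),\qquad \sigma_J^z := \Gamma(-s_1(z),\dots,-s_n(z))\,g_J^z,
\]
and assemble $\sigma^z(\xi) := \sum_{J\in\mathbb Z^n} \sigma_J^z(2^{-J}\xi)$. Since $\sigma_J^\theta=\sigma_J$, the partition-of-unity identity $\sum_J \prod_\ell \wh\psi(2^{-j_\ell}\xi_\ell)=1$ (for $\xi_\ell\neq 0$) gives $\sigma^\theta=\sigma$ and hence $T_{\sigma^\theta}=T_\sigma$. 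Holomorphy of $z\mapsto |g_J|^{r/r(z)}=e^{(r/r(z))\log|g_J|}$ and of the Bessel symbol $\prod_\ell (1+4\pi^2 y_\ell^2)^{-s_\ell(z)/2}$ makes $\{T_{\sigma^z}\}$ analytic in the interior of the strip.

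To apply Stein's theorem I must verify admissible bounds on $\mathrm{Re}(z)=k\in\{0,1\}$. The sign construction yields $\|g_J^z\|_{L^{r_k}}=\|g_J\|_{L^r}^{r/r_k}\le 1$ uniformly in $J$, and $\Gamma(s^k)\sigma_J^z = \Gamma(s^k-s(z))\,g_J^z$ with $s^k-s(z) = -i\,\mathrm{Im}(z)(s^1-s^0)$ purely imaginary; the resulting symbol $\prod_\ell(1+4\pi^2 y_\ell^2)^{i\tau_\ell/2}$ is a Mihklin multiplier on $L^{r_k}$ of operator norm at most $C(1+|\mathrm{Im}(z)|)^N$. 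Combined with a near-orthogonality estimate controlling cross-scale contributions in
\[
(\sigma^z)_{J'} = \sum_{J} \sigma_J^z(2^{J'-J}\xi)\,\prod_\ell \wh\psi(\xi_\ell),
\]
this produces $\sup_{J'}\|\Gamma(s^k)(\sigma^z)_{J'}\|_{L^{r_k}} \le C(1+|\mathrm{Im}(z)|)^N$, which is of admissible growth. The hypothesis then gives $\|T_{\sigma^z}\|_{L^{p_k}\to L^{p_k}}\le K_k C(1+|\mathrm{Im}(z)|)^N$ on each boundary, and Stein's theorem delivers the desired bound at $z=\theta$.

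The main technical obstacle is that $\Gamma(-s(z))$ does not preserve the compact frequency support of $g_J^z$, so $\sigma_J^z$ has Schwartz-decaying tails that overlap between distinct dyadic scales. Showing that $\sigma^z$ is a well-defined, analytically varying symbol on the strip, and rigorously executing the cross-scale near-orthogonality estimate for $(\sigma^z)_{J'}$, requires careful use of the Schwartz decay of the Bessel kernels together with the dyadic scale separation encoded in the partition of unity. This cross-scale analysis is the delicate core of the argument; once it is in place, the remainder is standard Stein-interpolation machinery.
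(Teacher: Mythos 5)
Your outline follows the same route as the paper: a Calder\'on--Torchinsky-type complex interpolation in which the sign/modulus trick of Riesz--Thorin is applied not to $\sigma$ itself but to $\varphi_J=\Gamma(s_1,\dots,s_n)[\sigma(2^J\cdot)\wh{\psi}]$, with the Bessel operators of interpolated (complex) order $\Gamma(-s_\ell(z))$ undoing the differentiation, purely imaginary-order Bessel multipliers supplying the admissible polynomial growth on the boundary lines, and a three-lines/Stein argument at the end. That part is fine. The genuine gap is exactly the step you flag and then defer. The paper does not define the analytic family as $\sum_J \Gamma(-s(z))[g_J^z](2^{-J}\xi)$; it inserts an extra cutoff and sets
\begin{equation*}
\sigma_z(\xi)=\sum_{J\in\mathbb Z^n}\Gamma\big(\{-s_\ell^0(1-z)-s_\ell^1z\}_\ell\big)\big[|\varphi_J|^{r(\frac{1-z}{r_0}+\frac{z}{r_1})}e^{i\,\mathrm{Arg}\,\varphi_J}\big](2^{-J}\xi)\,\wh{\theta}(2^{-J}\xi),
\end{equation*}
where $\wh{\theta}$ is supported in $\{1/4\le|\xi_\ell|\le 4\}$ and equals $1$ on $\supp\wh{\psi}$. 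This re-localization is harmless at $z=\theta$ (there $\Gamma(-s)\varphi_J=\sigma(2^J\cdot)\wh{\psi}$ is already supported where $\wh{\theta}=1$, so $\sigma_\theta=\sigma$), but for other $z$ it is what makes the sum locally finite and makes $\sigma_z(2^K\cdot)\wh{\psi}$ pick up only the finitely many $J$ with $|j_\ell-k_\ell|=O(1)$; the only residual issue is removing the factor $\wh{\theta}$, which is done with the Kato--Ponce inequality.

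Without that cutoff, your ``cross-scale near-orthogonality estimate'' for $(\sigma^z)_{J'}=\sum_J\sigma_J^z(2^{J'-J}\cdot)\wh{\psi}$ is not a routine tail estimate, and as stated your justification for it does not work: on the boundary lines $g_J^z=|g_J|^{r/r_k}e^{i\,\mathrm{Arg}\,g_J}$ is merely an $L^{r_k}$ function with norm $\le1$ --- it has no smoothness, no decay, and no compact support (note that $\varphi_J=\Gamma(s)[\sigma(2^J\cdot)\wh{\psi}]$ is itself not compactly supported). Hence $\sigma_J^z=\Gamma(-s(z))g_J^z$ is a Bessel potential of a rough $L^{r_k}$ function, and the ``Schwartz decay of the Bessel kernels'' gives you no uniform pointwise smallness of $\sigma_J^z(2^{J'-J}\xi)$ on $\supp\wh{\psi}$ that would let you sum over all $J\in\mathbb Z^n$ and bound $\sup_{J'}\|\Gamma(s^k)[(\sigma^z)(2^{J'}\cdot)\wh{\psi}]\|_{L^{r_k}}$. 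Moreover, for the dilated terms $\Gamma(s^k)[\sigma_J^z(2^{J'-J}\cdot)\wh{\psi}]$ the derivatives of order $s^k$ produce factors $2^{(j'_\ell-j_\ell)s^k_\ell}$ that grow in one direction of $J'-J$, so convergence of the $J$-sum is genuinely in doubt. Since you explicitly leave this ``delicate core'' unexecuted, the proof is incomplete at its decisive point; inserting the $\wh{\theta}(2^{-J}\xi)$ localization into the definition of the analytic family (as the paper does) removes the difficulty entirely and is the missing idea.
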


Assuming Theorem~\ref{interpL}, we   complete the proof of Theorem~\ref{ThmMain} as follows:  

\begin{proof}
Given $1\leq r\leq \infty$ and $\ga_\ell >1/r$, $\ell=1,\dots,n$, fix $p\in (1,\infty)$ satisfying~\eqref{Marc-con}. In fact, we can assume that $p\in (1,2)$, since the case   $p\in (2,\infty)$     follows by duality  and the case $p=2$ is a consequence of Plancherel's theorem and of a Sobolev embedding into $L^\infty$. In addition, assume first that $\min_\ell \ga_\ell \leq \frac{1}{2}$. 
In view of~\eqref{Marc-con}, there is $\tau\in (0,1)$ such that
\begin{equation}\label{E:tau}
\frac{1}{p}-\frac{1}{2}<\tau \min_{\ell} \ga_\ell.
\end{equation}
Set $p_1=\frac{2}{\tau+1}$, $r_1=2r\min_{\ell} \ga_\ell$ and $\ga_\ell^1=\frac{1}{2}+\varepsilon$, $\ell=1,\dots,n$, where $\varepsilon>0$ is a real number whose exact value will be specified later. Since $p_1>1$ and $r_1 \ga_\ell^1>2\ga_\ell^1 >1$, $\ell=1,\dots,n$, Proposition~\ref{P:first_endpoint} yields that
\begin{equation}\label{endpoint1}
    \norm{T_\sigma(f)}_{L^{p_1}}\le C_1
    \sup_{j_1,\dots , j_n\in\mathbb Z}\bigg\|{  \Gamma ( \ga_1^1 ,\dots , \ga_n^1 )
\Big[ \sigma  (2^J \xi  ) \prod_{\ell=1}^n  
\widehat{\psi_\ell}(\xi_{\ell}) \Big]   }\bigg\|_{L^{r_1} }
    \norm{f}_{L^{p_1}}.
  \end{equation}

Pick $p_0=2$. Let $\theta$ be the real number satisfying
$$
\frac{1}{p}=\frac{1-\theta}{p_0}+\frac{\theta}{p_1},
$$
namely, $\theta=\frac{2}{\tau}(\frac{1}{p}-\frac{1}{2})$. Observe that, by~\eqref{E:tau}, $0<\theta<2\min_\ell \ga_\ell \leq 1$. Finally, choose real numbers $r_0$ and $\ga_\ell^0$, $\ell=1,\dots,n$, in such a way that 
\begin{equation}\label{E:rs}
\frac{1}{r}=\frac{1-\theta}{r_0}+\frac{\theta}{r_1}, \quad \ga_\ell=(1-\theta)\ga_\ell^0+\theta \ga_\ell^1.
\end{equation}
We claim that, for a suitable choice of $\varepsilon>0$, one has $r_0>2$ and $r_0 \ga_\ell^0>1$, $\ell=1,\dots,n$. Indeed, since $\min_\ell \ga_\ell \leq \frac{1}{2}$, we have $r_1\leq r$, and thus, by~\eqref{E:rs}, $r_0\geq r\geq r_1>2$. Further,
\begin{equation*}
r_0\ga_\ell^0= \frac{r_1 r(\ga_\ell - \theta \ga_\ell^1)}{r_1-\theta r}
=\frac{r \min_k \ga_k(\ga_\ell - \frac{\theta}{2}-\theta\varepsilon)}{\min_k \ga_k -\frac{\theta}{2}}
\geq \frac{r \min_k \ga_k(\min_k \ga_k - \frac{\theta}{2}-\theta\varepsilon)}{\min_k \ga_k -\frac{\theta}{2}}.
\end{equation*}
Since $r\min_k \ga_k >1$ and $\min_k \ga_k -\frac{\theta}{2}>0$, one gets $r_0 \ga_\ell^0>1$ if $\varepsilon>0$ is small enough. 
Consequently, the space $\{g:\,\,  \Gamma ( \ga_1^0 ,\dots , \ga_n^0 )g \in   L^{r_0}\}$ 
  embeds in $L^\nf$, and we thus have 
    \begin{equation}\label{400}
    \norm{T_\sigma(f)}_{L^{2}}\le C_1
    \sup_{j_1,\dots , j_n\in\mathbb Z}\bigg\|{  \Gamma ( \ga_1^0 ,\dots , \ga_n^0 )
\Big[ \sigma  (2^J \xi  ) \prod_{\ell=1}^n  
\widehat{\psi_\ell}(\xi_{S_\ell}) \Big]   }\bigg\|_{L^{r_{0}} }
    \norm{f}_{L^{2}}.
\end{equation}
The boundedness of $T_\sigma$ on $L^p(\R^n)$ for any $\sigma$ satisfying~\eqref{20000} thus follows from Theorem~\ref{interpL}. 
Finally, if $\min_\ell \ga_\ell>\frac{1}{2}$, then the required assertion follows directly from Proposition~\ref{P:first_endpoint}. 
\end{proof}
 
\begin{proof}[Proof of Theorem~\ref{interpL}]
  The proof of Theorem~\ref{interpL} follows closely that of ~\cite[Theorem 4.7]{CT} and for this reason we only provide an outline of its proof with few  details. Throughout the proof we shall use the notation introduced at the beginning of the previous section. Also, whenever $J\in \mathbb Z^n$, we denote
  $$
  \varphi_J =  \Gamma ( s_1 ,\dots , s_n ) \Big[ \sigma  (2^J \xi  ) \wh{\psi}(\xi)  \Big], 
  $$
  and for $z$ with real part in $[0,1]$ we define
\begin{equation}\label{100}
  \sigma_z(\xi) = \sum_{J\in \mathbb Z^n}      \Gamma \Big( \{-s_\ell^0(1-z)-s_\ell^1 z \}_{\ell=1}^n\Big)    \!
    \left[  |\varphi_J|^{r(\frac{1-z}{r_0}+\frac{z}{r_1})}e^{i \textup{Arg } (\varphi_J)}
  \right](2^{-J}\xi)   \widehat{\theta } (2^{-J}\xi).
\end{equation}
 For any given $\xi \in\R^n$, this sum has only finitely many terms and one can show that
  \begin{equation}\label{200}
\| \si_{\tau+it} \|_{\li} \lesssim (1+ \, |t|)^{\frac{3n}{2}}
         \Big(  \sup_{J\in\mathbb Z^n}\big\| \Gamma(s_1,\dots , s_n)[{\sigma(2^J\xi)\widehat{\psi}}(\xi)]\big\|_{L^{r} } \Big)^{\f r{r_\tau}} \,,
   \end{equation}
	where $r_\tau$ is the real number satisfying $\frac{1}{r_\tau}=\frac{1-\tau}{r_0}+\frac{\tau}{r_1}$.

  Let $T_z$ be the family of operators associated to the multipliers $\sigma_z.$  
  Fix $f, g\in  \mathscr C_0^\nf$ and $1<p_0<p<p_1<\nf$.
Given $\ve>0$ there exist functions $f_z^{ \ve}$ and
  $ {g}_z^{ \ve}$
  such that
  $  \norm{f_\theta^\ve-f}_{L^p}<\ve$, $ \norm{g_\theta^\ve-g}_{L^{p'}}<\ve, $ 
  and that
  \begin{align*}
 & \norm{f_{it}^\ve}_{L^{p_0}}\le   \big(\! \norm{f}_{L^{p}}^p+\ve\big)^{\frac 1{p_0}} ,\quad
  \norm{f_{1+it}^\ve}_{L^{p_1}}\le   \big(\!  \norm{f}_{L^{p}}^p+\ve\big)^{\frac 1{p_1}},\\
 & \norm{g_{it}^\ve}_{L^{p_0'}}\le   \big(\!  \norm{g}_{L^{p'}}^{p'}+\ve\big)^{\frac {1}{p_0'}},\quad
  \norm{g_{1+it}^\ve}_{L^{p_1'}}\le  \big(\!  \norm{g}_{L^{p'}}^{p'}+\ve\big)^{\frac {1}{p_1'}}.
  \end{align*} 
 The existence of $f_z^{ \ve}$ and
  $ {g}_z^{ \ve}$ is folklore and is omitted; for a similar construction see~\cite[Theorem 3.3]{CT}. 
Let $  F(z) =  \int T_{\sigma_z}(f_z^\ve) {g}_z^\ve\; dx $. Then $F(z)$ is equal to 
  {\allowdisplaybreaks
  \begin{align*}
  &\int_{\R^n} \sigma_z(\xi)\widehat{f_z^\ve}(\xi)\widehat{g_z^\ve}(\xi)\; d\xi\\
  =& \sum_{J\in \mathbb Z^n}\int_{\R^n}    \Gamma \Big( \{-s_\ell^0(1-z)-s_\ell^1 z \}_{\ell=1}^n\Big)    \! 
  \left[
  |\varphi_J|^{r(\frac{1-z}{r_0}+\frac{z}{r_1})}e^{i \textup{Arg } (\varphi_J)}
  \right](2^{-j}\xi)\widehat{\theta}(2^{-J}\xi)
  \widehat{f_z^\ve}(\xi)\widehat{ {g}_z^\ve}(\xi)\; d\xi\\
  =& \sum_{J\in \mathbb Z^n}\int_{\R^n}
  \left[|{\varphi_J}|^{r(\frac{1-z}{r_0}+\frac{z}{r_1})}e^{i \textup{Arg } (\varphi_J)}
  \right](2^{-J}\xi)   \Gamma \Big( \{-s_\ell^0(1-z)-s_\ell^1 z \}_{\ell=1}^n\Big)    \!
  \left[\widehat{\theta}(2^{-J}\cdot)
  \widehat{f_z^\ve}\widehat{   {g}_z^\ve}\right]\!(\xi)\; d\xi.
  \end{align*}
  }

 The function  $F(z)$ is analytic on the strip $0<\Re(z)<1$ and continuous up to the boundary. 
Notice that    $\sigma_{it} (2^K\cdot) \wh\psi$  picks up only the terms of \eqref{100} for which $J$ differs from $K$ 
in some coordinate by at most one unit.  For simplicity we may therefore take $K=J$ in the calculation below. 
  Using the Kato-Ponce inequality we may ``remove" the factor $\wh\theta$  and   write 
  \begin{align*}
&\hspace{-16pt}  \norm{T_{\sigma_{it}}(f_{it}^\ve)}_{L^{p_0}}  \\
  \le& K_0\sup_{K\in\mathbb Z^n}
  \norm{ \Gamma(s_1^0,\dots , s_n^0) \big[\sigma_{it}(2^K\cdot)\widehat{\psi}\big]}_{L^{r_0} }\norm{f_{it}^\ve}_{L^{p_0}}\\
  \le & K_0\sup_{K\in\mathbb Z^n} 
  \big\|  \Gamma \big( \{ s_\ell^0  -s_\ell^0(1-it)-s_\ell^1 it \}_{\ell=1}^n\big)    \!
  \big[ |\varphi_K|^{r(\frac{1-it}{r_0}+\frac{it}{r_1})}e^{i \textup{Arg } (\varphi_K)}
  \big] \big\|_{L^{r_0} }\norm{f_{it}^\ve}_{L^{p_0}} \\
  \lesssim &  (1+  |t|)^{\frac{3n}2}K_0\,
  \sup_{K\in\mathbb Z^N}\| {\varphi_K}\|_{L^r}^{\frac{r}{r_0}}
   \big( \norm{f}_{L^{p}}^p+\ve\big)^{\frac 1{p_0}} .
  \end{align*}
Using H\"older's inequality  $  \abs{F(it)}\le \norm{T_{\sigma_{it}}(f_{it}^\ve)}_{L^{p_0}}\norm{g_{it}^\ve}_{L^{p_0'}}, $ 
 we may therefore write 
  $$
  \abs{F(it)}\le C  (1+\abs{t})^{\frac{3n}2}K_0
  \sup_{J\in\mathbb Z^n}
   \big\|{\Gamma(\{s_\ell\}_{\ell=1}^n)[\sigma(2^J\cdot)\widehat{\psi}]} \big\|_{L^r}^{\frac{r}{r_0}}
 \big( \norm{f}_{L^{p}}^p+\ve\big)^{\frac 1{p_0}}
  \big( \norm{g}_{L^{p'}}^{p'}+\ve\big)^{\frac {1}{p_0'}} 
  $$
for some constant $C=C(n,r_0,s_\ell^0,s_\ell^1)$.  Similarly, for some constant $C=C(n,r_1,s^0_\ell,s^1_\ell)$ we   obtain
  $$
  \abs{F(1+it)}\le C  (1+\abs{t})^{\frac{3n}2}K_1
  \sup_{J\in\mathbb Z^n}
   \big\|{\Gamma(\{s_\ell\}_{\ell=1}^n)[\sigma(2^J\cdot)\widehat{\psi}]} \big\|_{L^r}^{\frac{r}{r_1}}
\big( \norm{f}_{L^{p}}^p+\ve\big)^{\frac 1{p_1}}
  \big( \norm{g}_{L^{p'}}^{p'}+\ve\big)^{\frac 1{p_1'}}.
  $$
Thus for $z=\tau+it$,  $t\in\mathbb{R}$  and $0\le \tau \le  1$
it follows from \eqref{200} and from the definition of $F(z)$  that
$$
|F(z)|\le 
C'' 
(1+ \, |t|)^{\frac{3n}2}
         \Big(   \sup_{J\in\mathbb Z^n}\big\|\Gamma (s_1,\dots , s_n)\big[{\sigma(2^J\cdot)\widehat{\psi}}\big]\big\|_{L^{r} } \Big)^{\f r{r_\tau}}
        \|f_z^\ve\|_{L^2}   \|g_z^\ve\|_{L^2} 
=A_\tau(t)\, ,
$$
noting that $  \|f_z^\ve\|_{L^2}   \|g_z^\ve\|_{L^2} $ is bounded above by constants independent of $t$ and $\tau$. 
Since $A_\tau(t)\le
\exp(A e^{a|t|})  $, the hypotheses of three lines lemma are valid.
It follows that 
  $$
  \abs{F(\theta)}\le C\, K_0^{1-\theta} K_1^{\theta}\sup_{J\in\mathbb Z^n}
  \big\|{\Gamma(\{s_\ell\}_{\ell=1}^n)[\sigma(2^J\cdot)\widehat{\psi}]} \big\|_{L^r}
 \big( \norm{f}_{L^{p}}^p+\ve\big)^{\f{1}{p}}
  \big( \norm{g}_{L^{p'}}^{p'}+\ve\big)^{\f{1}{p'}} .
  $$
Taking the supremum over all functions $g\in L^{p'}$ with $\| g\|_{L^{p'}} \le 1$, a simple density argument yields
for some $C_*=C_*(n,r_1,r_2,s_\ell^0,s_\ell^1)$
  $$
  \norm{T_{\sigma }(f)}_{L^p}\le C_*\, K_0^{1-\theta} K_1^\theta \sup_{J\in\mathbb Z^n}
  \big\|{ \Gamma(s_1,\dots , s_n) [\sigma(2^J\cdot)\widehat{\psi}]}\big\|_{L^r}
\norm{f}_{L^{p}}. 
  $$
This completes the proof of the sufficiency part of Theorem~\ref{ThmMain}. The proof of the necessity part is postponed to section~\ref{S:examples}. 
	\end{proof}

\section{The proof of Theorem~\ref{T:comparison}}

Crucial ingredients needed for the proof of Theorem~\ref{T:comparison} are two one-dimensional inequalities contained in the following lemma.

\begin{lemma}\label{L:identity}
Let $\psi$ be as in Theorem~\ref{T:comparison}. If $k\in \mathbb Z$, $\gamma>0$ and $1\leq r \leq \infty$ are such that $\gamma r>1$, then
\begin{equation}\label{E:identity}
\big\|f(2^k \cdot) \wh{\psi}\big\|_{L^r(\R)} \leq C \big\|(I-\partial^2)^{\frac{\gamma}{2}} f\big\|_{L^r(\R)}
\end{equation}
and
\begin{equation}\label{E:laplacian}
\big\|(-\partial^2)^{\frac{\gamma}{2}}\big[f(2^{k}\cdot)\wh{\psi}\big]\big\|_{L^r(\R)}
\leq C\big(1+2^{k(\gamma-\frac{1}{r})}\big)\big\|(I-\partial^2)^{\frac{\gamma}{2}}f\big\|_{L^r(\R)}.
\end{equation}
\end{lemma}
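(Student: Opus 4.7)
My plan is to derive \eqref{E:identity} from the one-dimensional Sobolev embedding, and to obtain \eqref{E:laplacian} from a combination of the fractional Leibniz (Kato-Ponce) rule, a scaling computation, and the same Sobolev embedding.

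For \eqref{E:identity}, I would exploit the fact that $\wh{\psi}$ is a Schwartz function supported in $\{\xi\colon 1/2\le|\xi|\le 2\}$, so in particular $\wh{\psi}\in L^r(\R)$. H\"older's inequality then gives
\[
\big\|f(2^k\cdot)\wh{\psi}\big\|_{L^r(\R)}\le \|f\|_{L^\infty(\R)}\,\|\wh{\psi}\|_{L^r(\R)},
\]
and the assumption $\gamma r>1$ triggers the one-dimensional Sobolev embedding $\|f\|_{L^\infty(\R)}\le C\|(I-\p^2)^{\gamma/2}f\|_{L^r(\R)}$, which yields \eqref{E:identity}.

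For \eqref{E:laplacian}, I would first apply the one-dimensional Kato-Ponce rule to the product $f(2^k\cdot)\cdot\wh{\psi}$, obtaining
\[
\big\|(-\p^2)^{\gamma/2}[f(2^k\cdot)\wh{\psi}]\big\|_{L^r}\le C\Big(\big\|(-\p^2)^{\gamma/2}[f(2^k\cdot)]\big\|_{L^r}\|\wh{\psi}\|_{L^\infty}+\|f\|_{L^\infty}\big\|(-\p^2)^{\gamma/2}\wh{\psi}\big\|_{L^r}\Big).
\]
A routine change of variables on the Fourier side shows that the homogeneous Laplacian scales as
\[
\big\|(-\p^2)^{\gamma/2}[f(2^k\cdot)]\big\|_{L^r(\R)}=2^{k(\gamma-1/r)}\big\|(-\p^2)^{\gamma/2}f\big\|_{L^r(\R)},
\]
which accounts for the factor $2^{k(\gamma-1/r)}$ in the conclusion. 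The factor $\|f\|_{L^\infty}$ is again controlled by the above Sobolev embedding, and $\|(-\p^2)^{\gamma/2}\wh{\psi}\|_{L^r}$ is a fixed constant since $\wh{\psi}$ is Schwartz.

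The last ingredient is the comparison $\|(-\p^2)^{\gamma/2}f\|_{L^r}\le C\|(I-\p^2)^{\gamma/2}f\|_{L^r}$, which reduces to the $L^r$-boundedness of the Fourier multiplier $m(\xi)=(|2\pi\xi|/\sqrt{1+4\pi^2\xi^2})^\gamma$. This is the main obstacle of the plan: $m$ is bounded but fails to be smooth at the origin when $\gamma$ is not an even integer, so one must verify by hand that $m$ satisfies the Mihlin multiplier condition, checking both the behavior $m(\xi)\sim|\xi|^\gamma$ near $0$ and the decay of $\xi\,m'(\xi)$ at infinity. Once $m$ is confirmed to be a Mihlin multiplier, it is bounded on $L^r$ for every $1<r<\infty$, the range required by Theorem~\ref{T:comparison}. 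Assembling all of these estimates produces the factor $C(1+2^{k(\gamma-1/r)})$ claimed in \eqref{E:laplacian}.
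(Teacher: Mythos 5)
Your proposal is correct and follows essentially the same route as the paper: Sobolev embedding plus H\"older for \eqref{E:identity}, and Kato--Ponce, the scaling identity for $(-\p^2)^{\gamma/2}$, and Sobolev embedding for \eqref{E:laplacian}. The only difference is that you make explicit the final comparison $\|(-\p^2)^{\gamma/2}f\|_{L^r}\le C\|(I-\p^2)^{\gamma/2}f\|_{L^r}$ (valid for $1<r<\infty$, the range actually used in Theorem~\ref{T:comparison}), which the paper uses silently; that step is standard and your Mihlin-condition verification handles it.
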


\begin{proof}
Since $\gamma r>1$, the Sobolev embedding theorem yields
\begin{equation}\label{E:sobolev_embedding}
|f(2^k x)|\leq \|f\|_{L^\infty(\R)}\leq  C \big\|(I-\partial^2)^{\frac{\gamma}{2}} f\big\|_{L^r(\R)} \quad \textup{for a.e. } x\in \R.
\end{equation}
Therefore,
$$
\big\|f(2^k \cdot) \wh{\psi}\big\|_{L^r(\R)} 
\leq C \big\|(I-\partial^2)^{\frac{\gamma}{2}} f\big\|_{L^r(\R)} \big\|\wh{\psi}\big\|_{L^r(\R)} 
=C' \big\|(I-\partial^2)^{\frac{\gamma}{2}} f\big\|_{L^r(\R)}.
$$
This proves~\eqref{E:identity}.

Further, using the Kato-Ponce inequality, the estimate~\eqref{E:sobolev_embedding} and the fact that $\wh{\psi}$ is smooth and with compact support, we obtain
\begin{align*}
&\big\|(-\partial^2)^{\frac{\gamma}{2}}\big[f(2^{k}\cdot)\wh{\psi}\big]\big\|_{L^r(\R)}\\
&\leq C\left(\big\|(-\partial^2)^{\frac{\gamma}{2}}\big[f(2^{k}\cdot)\big]\big\|_{L^r(\R)} \big\|\wh{\psi}\big\|_{L^\infty(\R)} +\big\|f(2^{k}\cdot)\big\|_{L^\infty(\R)}\big\|(-\partial^2)^{\frac{\gamma}{2}}\wh{\psi}\big\|_{L^r(\R)}\right)\\
&\leq C\left(\big\|(-\partial^2)^{\frac{\gamma}{2}}\big[f(2^{k}\cdot)\big]\big\|_{L^r(\R)}+ \big\|(I-\partial^2)^{\frac{\gamma}{2}}f\big\|_{L^r(\R)}\right)\\
&=C\left(2^{k(\gamma-\frac{1}{r})} \big\|(-\partial^2)^{\frac{\gamma}{2}}f\big\|_{L^r(\R)}+ \big\|(I-\partial^2)^{\frac{\gamma}{2}}f\big\|_{L^r(\R)}\right)\\
&\leq C\big(2^{k(\gamma-\frac{1}{r})}+1\big) \big\|(I-\partial^2)^{\frac{\gamma}{2}}f\big\|_{L^r(\R)},
\end{align*}
namely, \eqref{E:laplacian}.
\end{proof}

\begin{proof}[Proof of Theorem~\ref{T:comparison}]
Set $F(\xi)=\sum_{a=-n}^n \wh{\Phi}(2^a \xi)$, $\xi \in \R^n$. Then $F(\xi)=1$ for any $\xi$ satisfying $\frac{1}{2^n}\leq |\xi| \leq 2^n$. Therefore, if $j_1,\dots,j_n$ are integers and $j:=\max\{j_1,\dots,j_n\}$, then 
$F(2^{j_1-j}\xi_1,\dots,2^{j_n-j}\xi_n)=1$ on $\{(\xi_1,\dots,\xi_n): \frac{1}{2}\leq |\xi_1| \leq 2,\dots, \frac{1}{2} \leq |\xi_n| \leq 2\}$. Consequently,
$$
\prod_{\ell=1}^n \wh{\psi}(\xi_\ell) = F(2^{j_1-j}\xi_1,\dots,2^{j_n-j}\xi_n) \prod_{\ell=1}^n \wh{\psi}(\xi_\ell).
$$
Using this, we can write
\begin{align*}
&\left\|(I-\partial_1^2)^{\frac{\gamma_1}{2}} \cdots (I-\partial_n^2)^{\frac{\gamma_n}{2}}\bigg[\sigma(2^{j_1}\xi_1,\dots,2^{j_n}\xi_n) \prod_{\ell=1}^n \wh{\psi}(\xi_\ell)\bigg]\right\|_{L^r}\\
&=\left\|(I-\partial_1^2)^{\frac{\gamma_1}{2}} \cdots (I-\partial_n^2)^{\frac{\gamma_n}{2}}\bigg[\sigma(2^{j_1}\xi_1,\dots,2^{j_n}\xi_n) F(2^{j_1-j}\xi_1,\dots,2^{j_n-j}\xi_n) \prod_{\ell=1}^n \wh{\psi}(\xi_\ell)\bigg]\right\|_{L^r}\\
&\leq C\sum_{a=-n}^n\sum_{\{i_1,\dots,i_k\} \subseteq \{1,\dots,n\}} \\
&\left\|(-\partial_{i_1}^2)^{\frac{\gamma_{i_1}}{2}} \cdots (-\partial_{i_k}^2)^{\frac{\gamma_{i_k}}{2}}\bigg[\sigma(2^{j_1}\xi_1,\dots,2^{j_n}\xi_n) \wh{\Phi}(2^{j_1-j+a}\xi_1,\dots,2^{j_n-j+a}\xi_n) \prod_{\ell=1}^n \wh{\psi}(\xi_\ell)\bigg]\right\|_{L^r}.\\
\end{align*}
Using the estimate~\eqref{E:laplacian} in variables $i_1,\dots,i_k$ and inequality \eqref{E:identity} in the remaining variables, we estimate the corresponding term in the last expression by a constant multiple of
\begin{align*}
& \bigg[ \prod_{s=1}^n \big(1+2^{(j_{i_s}-j+a)(\gamma_{i_s}-\frac{1}{r})}\big) \bigg] 
\left\|(I-\partial_{1}^2)^{\frac{\gamma_{1}}{2}} \cdots (I-\partial_{n}^2)^{\frac{\gamma_{n}}{2}} \bigg[\sigma(2^{j-a}\xi) \wh{\Phi}(\xi)\bigg]\right\|_{L^r}\\
&\leq C\big(1+2^{n\max_{\ell=1,\dots,n}(\gamma_\ell - \frac{1}{r})}\big)^n
\left\|(I-\Delta)^{\frac{\gamma_{1}+\dots+\gamma_n}{2}} \bigg[\sigma(2^{j-a}\xi) \wh{\Phi}(\xi)\bigg]\right\|_{L^r}\\
&\leq C \sup_{m\in \mathbb Z} \left\|(I-\Delta)^{\frac{\gamma_{1}+\dots+\gamma_n}{2}} \bigg[\sigma(2^{m}\xi) \wh{\Phi}(\xi)\bigg]\right\|_{L^r}.
\end{align*}
This implies~\eqref{E:inequality}.
\end{proof}

\section{Examples and Remarks}\label{S:examples}

Next we discuss examples that indicate  the sharpness of Theorem~\ref{ThmMain}. 
As stated, the sufficient condition presented in   Theorem~\ref{ThmMain} is optimal in the sense that if    $T_\si$ is  bounded from $L^p(\rn)$ to itself for all $\si$
satisfying \eqref{20000}, then   \eqref{Marc-con} holds. This has been observed, at least in the two-dimensional case with both smoothness parameters equal, by Carbery and Seeger~\cite[remark after Proposition 6.1]{CSTAMS}. We provide an example in the spirit of theirs, given by an explicit closed-form expression and valid in all dimensions $n \geq 2$. 

\begin{example}\label{example}
Given $\alpha \in (0,1)$, consider the function
$$
\sigma(\xi,\eta)=\varphi(|\eta|) e^{-\frac{\xi^2}{2}} |\eta|^{i\xi} (\log |\eta|)^{-\alpha}, \quad (\xi,\eta)\in \R \times \R^{n-1}=\R^n,
$$
where $\varphi$ is a smooth function on the line such that $0\leq \varphi \leq 1$, $\varphi=0$ on $(-\infty,8]$ and $\varphi=1$ on $[9,\infty)$. Then

\noindent
\textup{(i)} $\sigma$ satisfies~\eqref{20000}, with $r$ large enough, whenever $\gamma_1=\alpha$ and $\gamma_2, \dots, \gamma_n$ are arbitrary positive real numbers;

\noindent
\textup{(ii)} $\sigma$ is an $L^p$ Fourier multiplier for a  given $1<p<\infty$ if and only if $\alpha >|\frac{1}{p}-\frac{1}{2}|$.
\end{example}

The previous example indicates that condition~\eqref{20000} does not guarantee that $T_\sigma$ is $L^p$ bounded unless all indices $\gamma_1, \dots, \gamma_n$ in~\eqref{20000} are larger than $|\frac{1}{p}-\frac{1}{2}|$. In particular, for a given $i \in \{1,\dots,n\}$, one does not have boundedness on the critical line $\gamma_i=|\frac{1}{p}-\frac{1}{2}|$, no matter how large the remaining parameters are. 

Let us now verify the statement of part \textup{(i)} of Example~\ref{example}. We shall first prove that 
\begin{equation}\label{E:mixed}
\sup_{k, \ell \in \mathbb Z} \|(I-\partial_\xi^2)^{\frac{\alpha}{2}}(I-\Delta_\eta)^{\frac{s}{2}}[\wh{\psi}(\xi) \wh{\Phi}(\eta)\sigma(2^k\xi, 2^\ell \eta)]\|_{L^r}<\infty
\end{equation}
for any $s>0$ and $r> 1$. Here, $\Phi$ denotes a Schwartz function on $\R^{n-1}$ whose Fourier transform is supported in the set $\{\eta \in \R^{n-1}: \frac{1}{2}\leq |\eta|\leq 2\}$ and which satisfies $\sum_{\ell \in \mathbb Z} \wh{\Phi}(2^\ell \eta)=1$ for all $\eta \neq 0$. Indeed, for any $k, \ell \in \mathbb Z$, $\ell \geq 3$, and for any given nonnegative integer $m$, we have
$$
\|\wh{\psi}(\xi) \wh{\Phi}(\eta)\sigma(2^k\xi, 2^\ell \eta)\|_{L^r} \leq C \ell^{-\alpha}
$$
and
$$
\|(I-\partial_\xi)^{\frac{1}{2}}(I-\Delta_\eta)^{\frac{m}{2}}[\wh{\psi}(\xi) \wh{\Phi}(\eta)\sigma(2^k\xi, 2^\ell \eta)]\|_{L^r} \leq C \ell \cdot \ell^{-\alpha},
$$
where the constant $C$ is independent of $k$ and $\ell$. Interpolating between these two estimates, we obtain
$$
\|(I-\partial_\xi)^{\frac{\alpha}{2}}(I-\Delta_\eta)^{\frac{\alpha m}{2}}[\wh{\psi}(\xi) \wh{\Phi}(\eta)\sigma(2^k\xi, 2^\ell \eta)]\|_{L^r} \leq C.
$$
Notice also that the last inequality in fact holds for all integers $k$, $\ell$, since the function $\wh{\psi}(\xi) \wh{\Phi}(\eta)\sigma(2^k\xi, 2^\ell \eta)$ is identically equal to $0$ if $\ell \leq 2$. Hence, we have
$$
\sup_{k, \ell \in \mathbb Z} \|(I-\partial_\xi)^{\frac{\alpha}{2}}(I-\Delta_\eta)^{\frac{\alpha m}{2}}[\wh{\psi}(\xi) \wh{\Phi}(\eta)\sigma(2^k\xi, 2^\ell \eta)]\|_{L^r} \leq C,
$$
and interpolating between variants of this estimate corresponding to different values of $m$, we obtain~\eqref{E:mixed} for any $s>0$. Now, part \textup{(i)} of Example~\ref{example} follows by an application of Theorem~\ref{T:comparison} in the variable $\eta$.

Let us finally focus on part \textup{(ii)} of Example~\ref{example}. If $\alpha >|\frac{1}{p}-\frac{1}{2}|$, then $\sigma$ is an $L^p$ Fourier multiplier thanks to \textup{(i)} and Theorem~\ref{ThmMain}. Let us now prove that $T_\sigma$ is not $L^p$ bounded if $\alpha \leq |\frac{1}{p}-\frac{1}{2}|$. By duality, it suffices to discuss only the case when $1<p<2$. Further, by a result of Herz and Rivi\`ere~\cite{HR}, our claim will follow if we show that $T_\sigma$ is not bounded on the mixed norm space $L^p(\R; L^2(\R^{n-1}))$.

Let $f$ be the function on $\R^n$ whose Fourier transform satisfies
$$
\wh{f}(\xi,\eta)=e^{-\frac{\xi^2}{2}} \varphi(|\eta|) |\eta|^{\frac{1-n}{2}} (\log |\eta|)^{-\frac{1}{2}} (\log \log |\eta|)^{-\beta}, \quad (\xi,\eta) \in \R \times \R^{n-1}.
$$
Using the Plancherel theorem in the variable $\eta$, it is easy to check that $f\in L^p(\R; L^2(\R^{n-1}))$ whenever $\beta >\frac{1}{2}$. Our next goal is to prove that $T_\sigma f=\mathcal F^{-1}(\sigma \wh{f})$ does not belong to $L^p(\R; L^2(\R^{n-1}))$ if $\beta \in (\frac{1}{2},\frac{1}{p}]$. Using Plancherel's theorem in the variable $\eta$ again, this is equivalent to showing that $\mathcal F^{-1}_\xi(\sigma \wh{f}\,)$ is not in $L^p(\R; L^2(\R^{n-1}))$, where $\mathcal F^{-1}_\xi$ stands for the inverse Fourier transform in the $\xi$ variable.

Observe that
\begin{align*}
\mathcal F^{-1}_\xi(\sigma \wh{f}\,)(x,\eta)
&=C e^{-\frac{1}{4}(2\pi x+\log |\eta|)^2} \varphi^2(|\eta|)|\eta|^{\frac{1-n}{2}} (\log |\eta|)^{-\alpha-\frac{1}{2}} (\log \log |\eta|)^{-\beta}\\
&\geq C \chi_{\{(x,\eta): ~x<-2, ~e^{-2\pi x-1} <|\eta|<e^{-2\pi x}\}}(x,\eta) e^{2\pi x \frac{n-1}{2}} (-x)^{-\alpha-\frac{1}{2}} (\log (-x))^{-\beta}.
\end{align*}
Therefore, 
$$
\|\mathcal F^{-1}_\xi(\sigma \wh{f}\,)\|_{L^p(\R; L^2(\R^{n-1}))} 
\geq C\left( \int_{-\infty}^{-2} (-x)^{(-\alpha-\frac{1}{2})p} (\log (-x))^{-\beta p} \,dx\right)^{\frac{1}{p}} =\infty,
$$
which yields the desired conclusion.

\medskip

Acknowledgment: We would like to thank Andreas Seeger for his useful comments.


\begin{thebibliography}{99}



 

\bibitem{CT}  A. P.  Calder\'on,  A. Torchinsky,
  {\it Parabolic maximal functions associated with a distribution,  II,}
  Adv. in Math.  \textbf{24} (1977),  101--171.

\bibitem{C} A. Carbery, \emph{Differentiation in lacunary directions and an extension of the 
Marcinkiewicz multiplier theorem}, Ann. de l' Institut Fourier {\bf 38} (1988), 157--168. 

\bibitem{CSTAMS} A. Carbery and A. Seeger,
\emph{$H^p-$ and $L^p$-Variants of multiparameter Calder\'on-Zygmund theory},
Trans.  AMS {\bf 334} (1992), 719--747.  

\bibitem{CS} A. Carbery and A. Seeger, \emph{Homogeneous Fourier multipliers
of Marcinkiewicz type}, Ark. Mat. {\bf 33} (1995), 45--80. 


\bibitem{Duo} J. Duoandikoetxea, \emph{Fourier Analysis}, Grad. Studies in Math., Vol 29, AMS, Providence, RI, 2001. 

\bibitem{FS} C. Fefferman, E. M. Stein, \emph{Some maximal inequalities}, 
Amer. J. Math. {\bf 93} (1971), 107--115.
 
\bibitem{CFA} L. Grafakos,   \emph{Classical Fourier Analysis}, 3rd edition, GTM 249, Springer--Verlag, NY 2014.

 \bibitem{GraHeHonNg1} L. Grafakos, D. He, P. Honz\'\i k,   H. V. Nguyen, 
\emph{The H\"ormander multiplier theorem I: The linear case revisited}, 
submitted, available at https://arxiv.org/pdf/1607.02620.pdf 

\bibitem{GHNY} L. Grafakos, D. He,   H. V. Nguyen,    L. Yan, 
\emph{Multiplier theorems and applications}, submitted, available at 
https://arxiv.org/pdf/1511.08117.pdf
 

\bibitem{GOh} L. Grafakos, S. Oh, 
\emph{The Kato-Ponce inequality}, Comm. in PDE  {\bf 39} (2014),   1128--1157.

\bibitem{GrafSlav1} L. Grafakos,  L. Slav\' ikov\'a,
\emph{A sharp version of the H\"ormander multiplier theorem}, submitted, available at https://arxiv.org/pdf/1706.06507.pdf



\bibitem{HR} C. Herz,   N. M. Rivi\`ere, \emph{Estimates for translation invariant operators on spaces with mixed norms}, Studia Math. {\bf 44} (1972), 511--515.

\bibitem{hirschman2} I. I. Jr. Hirschman,   \emph{On multiplier transformations},  Duke Math. J. {\bf 26} (1959),
221--242.

 

 


\bibitem{Miy} A. Miyachi,   \emph{On some Fourier multipliers for $H^p(\mathbb R^n)$},  J. Fac. Sci. Univ. Tokyo
Sect. IA Math. 27 (1980),  157--179. 

\bibitem{MT} A. Miyachi,  N. Tomita,
\emph{Minimal smoothness conditions for bilinear Fourier multipliers}, Rev. Mat. Iberoam.
{\bf 29}   (2013), 495--530. 




 

\bibitem{W} S. Wainger,  \emph{Special trigonometric series in k-dimensions},  Mem. Amer. Math. Soc.
{\bf 59} (1965), 1--102.
 

 \end{thebibliography}
 \end{document}